\documentclass[reqno,a4paper,11pt]{amsart}

\parindent=15pt
\parskip=3pt
\setlength{\textwidth}{7.2in}
\setlength{\oddsidemargin}{-17pt}
\setlength{\evensidemargin}{-17pt}
\setlength{\textheight}{10in}
\setlength{\topmargin}{-17pt}

\usepackage[all,poly]{xy}
\usepackage{amsfonts}
\usepackage[mathcal]{eucal}
\usepackage{eufrak}
\usepackage{amssymb}
\usepackage{amsmath}
\usepackage{mathrsfs}
\usepackage{color}
\usepackage[colorlinks]{hyperref}
\usepackage{enumerate}

\theoremstyle{plain}
\newtheorem {lemma}{Lemma} 
\newtheorem {theorem}[lemma]{Theorem}

\newtheorem* {conjecture}{Conjecture}

\newtheorem {proposition}[lemma]{Proposition}

\theoremstyle{definition}
\newtheorem {remark}[lemma]{Remark}

\theoremstyle{definition}

{}


\newcommand{\M}{\operatorname{\mathbb M}}
\newcommand{\LL}{\operatorname{\mathcal L}}

\newcommand{\gr}{\operatorname{gr}}

\newcommand{\id}{\operatorname{id}}

\renewcommand{\Im}{\operatorname{Im}}

\title[The isomorphism conjectures for Leavitt path algebras]{A note on the isomorphism conjectures for\\ Leavitt path algebras}


\author{R.~Hazrat}\address{
School of Computing, Engineering and Mathematics\\
University of Western Sydney\\
Australia}

\email{r.hazrat@uws.edu.au}

\subjclass[2000]{16D70} 
\keywords{Path algebras, Leavitt path algebras, strongly graded rings}

\begin{document}
\begin{abstract}
We relate two conjectures which have been raised for classification of Leavitt path algebras. For purely infinite simple unital Leavitt path algebras, it is conjectured that $K_0$ classifies them completely~\cite{aalp,flowa}. For arbitrary Leavitt path algebras, it is conjectured that $K^{\gr}_0$ classifies them completely~\cite{hazann}. We show that for two finite graphs with no sinks (which their associated Leavitt path algebras include the purely infinite simple ones) if their $K^{\gr}_0$-groups of their Leavitt path algebras are isomorphic then their $K_0$-groups are isomorphic as well.  

\end{abstract}

\maketitle

 The theory of Leavitt path algebras were introduced in~\cite{aap05,amp} which associate to directed graphs certain type of algebras. These algebras were motivated by Leavitt's construction of universal non-IBN rings~\cite{vitt62}. One current direction in the theory is  focused on their classification. Motivated by the success of $K$-theory in classification of certain types of $C^*$-algebras~(\cite{phillips,rordam}) and the close connections of Leavitt path algebras with graph $C^*$-algebras, it is believed that a variant of $K$-groups ought to classify these objects. 
 
 We refer the reader to~\cite{aap05,aap06} for basics in the theory of Leavitt path algebras and~\cite{malaga} for their connections to graph $C^*$-algebras. For a graph $E$, the Leavitt path algebra in this note is over a fixed field $K$ and we simply denote the algebra by $\LL(E)$.

 Throughout this note, unless it is stated otherwise, all graphs are finite, all modules are right modules and $\mbox{gr-}A$ and $\mbox{mod-}A$ stand for the category of graded right $A$-modules and right $A$-modules, respectively. 

In the theory of Leavitt path algebras, two questions/conjectures have been put forward.

\begin{enumerate}[\upshape (i)]
\item \label{s11} Let $E$ and $F$ be finite graphs such that $\LL(E)$ and $\LL(F)$ are purely infinite simple. Then  
\begin{equation}\label{ookjh}
\big(K_0(\LL(E)),[\LL(E)]\big) \cong \big(K_0(\LL(F)),[\LL(F]\big)
\end{equation}
if and only if  $\LL(E) \cong \LL(F)$.

\item  \label{s22} Let $E$ and $F$ be any  graphs. Then there is an ordered preserving $\mathbb Z[x,x^{-1}]$-module  isomorphism 
\begin{equation}\label{ookjh2}
\big (K_0^{\gr}(\LL(E)),[\LL(E)]\big ) \cong \big (K_0^{\gr}(\LL(F)),[\LL(F)]\big )
\end{equation}
 if and only if $\LL(E)\cong_{\gr} \LL(F).$  
\end{enumerate}

The formula (1) means that there is an isomorphism between $K_0(\LL(E))$ and $K_0(\LL(F))$ such that $[\LL(E)]$ is sent to $[\LL(F)]$ by this isomorphism. 
Statement~(\ref{s11}) was raised as ``the classification question for purely infinite simple unital Leavitt path algebras" in~\cite{aalp}, where the authors provided affirmative answer for  the category of purely infinite simple  Leavitt path algebras whose graphs  have at most three vertices and no parallel edges. The question was further investigated in~\cite{flowa}, where among other things, it was proved that for two finite graphs $E$ and $F$, which their (purely infinite simple) Leavitt path algebras are Morita equivalent,  Statement~(\ref{s11}) is valid. 

Clearly Statement~(\ref{s11}) is not valid for all graphs. For example for the graphs 
\begin{equation*}
\xymatrix{
F: & \bullet \ar[r] & \bullet \ar[r] & \bullet, & \LL(F)\cong \M_3(K),\\
E:&  \bullet \ar[r]  & \bullet \ar@/^1.5pc/[r] & \bullet, \ar@/^1.5pc/[l] & \LL(E)\cong \M_3(K[x,x^{-1}]),\\
}
\end{equation*}
one can easily observe that although 
\[\Big(K_0(\LL(E)), [\LL(E)]\Big ) \stackrel{\cong}{\longrightarrow} \big(\mathbb Z, 3\big),\]
\[\Big(K_0(\LL(F)), [\LL(F)]\Big ) \stackrel{\cong}{\longrightarrow} \big(\mathbb Z, 3\big),\]
but $\LL(E)\not \cong \LL(F)$.

Statement~(\ref{s22}) was conjectured in~\cite{hazann} for all graphs, where the statement was proved for the class of polycephaly graphs (multi-headed graphs). In this note we refer to this statement as the \emph{graded Conjecture}.

In this paper we relate these two conjectures. We show that formula~(\ref{ookjh2}) implies~(\ref{ookjh}). 

Statement~(\ref{s22}) was raised in the graded setting. A Leavitt path algebra has a natural grading and it turned out that a purely infinite simple unital Leavitt path algebra with this grading is strongly graded. In fact in~\cite[Theorem~3.15]{haz} it was proved that for a finite graph $E$, its associated Leavitt path algebra $\LL(E)$ is strongly graded if and only if $E$ has no sinks. The proof given in~\cite{haz} is long and involves graph chasing. Here we first provide a short proof for this fact using the machinery 
of corner skew Laurent polynomial rings. This theorem will be used in a crucial way in our main Theorem~\ref{mainii}.

Let $R$ be a ring with identity and $p$ an idempotent of $R$. Let $\phi:R\rightarrow pRp$ be a corner isomorphism. A \emph{corner skew Laurent polynomial ring} with coefficients in $R$, denoted by $R[t_{+},t_{-},\phi]$, is a unital ring which is constructed as follows:  The elements of $R[t_{+},t_{-},\phi]$ are formal expressions
\[t^j_{-}r_{-j} +t^{j-1}_{-}r_{-j+1}+\dots+t_{-}r_{-1}+r_0 +r_1t_{+}+\dots +r_it^i_{+},\]
where $r_{-n} \in p_n R$ and $r_n \in R p_n$, for all $n\geq 0$, where $p_0 =1$ and $p_n =\phi^n(p_0)$. The addition is component-wise, and multiplication is determined by the distribution law and the following rules:
\begin{equation}\label{oiy53} 
t_{-}t_{+} =1, \qquad t_{+}t_{-} =p, \qquad rt_{-} =t_{-}\phi(r),\qquad  t_{+}r=\phi(r)t_{+}.
\end{equation}

The corner skew Laurent polynomial rings is studied in~\cite{arabrucom}, where its $K_1$-group is calculated. This construction is a special case of a so called fractional skew monoid rings constructed in~\cite{arafrac}. Assigning $-1$ to $t_{-}$ and $1$ to $t_{+}$ makes $A=R[t_{+},t_{-},\phi]$ a $\mathbb Z$-graded ring with $A=\bigoplus_{i\in \mathbb Z}A_i$, where $A_i=Rp_nt^i_{+}$, for $i>0$, $A_i=t^i_{-}p_nR$, for $i<0$ and $A_0=R$ (see~\cite[Proposition~1.6]{arafrac}). Clearly, when $p=1$ and $\phi$ is the identity map, then $R[t_{+},t_{-},\phi]$ reduces to the familiar ring $R[t,t^{-1}]$. 

Recall that an idempotent element $p$ of the ring $R$ is called a \emph{full idempotent} if $RpR=R$. 
\begin{proposition}\label{lanhc8}
Let $R$ be a ring with identity and $A=R[t_{+},t_{-},\phi]$ be a corner skew Laurent polynomial ring. Then $A$ is strongly graded if and only if $\phi(1)$ is a full idempotent. 
\end{proposition}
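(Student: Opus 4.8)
The plan is to reduce the statement to the behaviour of the two neighbouring products $A_1A_{-1}$ and $A_{-1}A_1$ inside $A_0=R$. I will use the standard criterion that a $\mathbb Z$-graded ring $A=\bigoplus_{i\in\mathbb Z}A_i$ is strongly graded if and only if $1\in A_1A_{-1}$ and $1\in A_{-1}A_1$. For completeness I would include the short justification of this criterion in the $\mathbb Z$-graded case: from $1\in A_1A_{-1}$ one gets $A_{i+1}\subseteq A_1A_{-1}A_{i+1}\subseteq A_1A_i\subseteq A_{i+1}$, hence $A_1A_i=A_{i+1}$ for every $i\in\mathbb Z$, and symmetrically $1\in A_{-1}A_1$ gives $A_{-1}A_i=A_{i-1}$ for every $i$; a straightforward induction on $|m|$ then yields $A_mA_n=A_{m+n}$ for all $m,n$. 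So it remains to compute the two products.

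First, from the relations \eqref{oiy53} taken with $r=1$ I would record $t_+=\phi(1)t_+=pt_+$ and $t_-=t_-\phi(1)=t_-p$, where $p=\phi(1)$; this is consistent with the description $A_1=Rpt_+$ and $A_{-1}=t_-pR$ of the homogeneous components of degrees $\pm 1$. Then, using $t_+t_-=p$ and $p^2=p$,
\[
A_1A_{-1}=(Rpt_+)(t_-pR)=Rp(t_+t_-)pR=RpR,
\]
so that $1\in A_1A_{-1}$ if and only if $RpR=R$, i.e. if and only if $\phi(1)$ is a full idempotent. For the other product, $pRp$ is exactly the image $\phi(R)$ since $\phi\colon R\to pRp$ is onto, so every element of $pRp$ has the form $\phi(r)$ with $r\in R$, and then $t_-\phi(r)t_+=(rt_-)t_+=r(t_-t_+)=r$ by \eqref{oiy53}. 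Hence
\[
A_{-1}A_1=(t_-pR)(Rpt_+)=t_-(pRp)t_+=R=A_0,
\]
with no hypothesis on $\phi$ whatsoever.

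Combining the two computations: $A$ is strongly graded if and only if $1\in A_1A_{-1}$ and $1\in A_{-1}A_1$; the second containment always holds by the displayed identity, while the first holds exactly when $\phi(1)$ is full. This proves the proposition. There is no serious obstacle in this argument — it is essentially the two one-line identities above together with the strongly-graded criterion — so the only point requiring care is applying each of the four relations in \eqref{oiy53} on the correct side (in particular $rt_-=t_-\phi(r)$ versus $t_+r=\phi(r)t_+$) and keeping track of how the idempotent $p=\phi(1)$ absorbs in the products $RpR$ and $pRp$.
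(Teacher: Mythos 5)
Your proof is correct and follows essentially the same route as the paper's: both reduce the statement to the criterion $1\in A_1A_{-1}$ and $1\in A_{-1}A_1$, identify $A_1A_{-1}=R\phi(1)R$ via $t_+t_-=\phi(1)$, and observe that $1\in A_{-1}A_1$ holds unconditionally. The only cosmetic differences are that you prove the strong-grading criterion rather than citing \cite[Proposition~1.1.1]{grrings}, and you compute $A_{-1}A_1=R$ in full where the paper only exhibits $1$ as the single product $t_-\phi(1)\cdot\phi(1)t_+$.
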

\begin{proof}
First note that $A_1=R\phi(1)t_{+}$ and $A_{-1}=t_{-}\phi(1)R$. Furthermore, assuming $\phi(1)=pcp$, $c\in R$, then 
\begin{equation*}
r_1\phi(1)t_{+} t_{-}\phi(1)r_2=r_1\phi(1)p\phi(1)r_2=r_1pcpppcpr_2=r_1\phi(1)\phi(1)r_2=r_1\phi(1)r_2.
\end{equation*}
Suppose $A$ is strongly graded. Then $1\in A_{1}A_{-1}$. That is 
\begin{equation}\label{ghgfri6}
1=\sum_i\big(r_i\phi(1)t_{+}\big)\big(t_{-}\phi(1)r'_i\big)=\sum_i r_i\phi(1)r'_i,
\end{equation}
where $r_i,r'_i\in R$. So $R\phi(1)R=R$, that is $\phi(1)$ is a full idempotent. 

On the other hand suppose $\phi(1)$ is a full idempotent. Since $\mathbb Z$ is generated by $1$, in order to prove that $A$ is strongly graded, it is enough to show that $1\in A_1A_{-1}$ and $1\in A_{-1}A_1$ (see~\cite[Proposition~1.1.1]{grrings}). But 
\[t_{-}\phi(1) \phi(1)t_{+}=t_{-1}\phi(1)t_{+}=1t_{-}t_{+}=1,\] shows that $1\in A_{-1}A_1$. Since $\phi(1)$ is a full idempotent, there are $r_i,r'_i\in R$, $i\in I$ such that 
$\sum r_i\phi(1)r'_i=1$. Then Equation~\ref{ghgfri6} shows that $1\in A_1A_{-1}$.  
\end{proof}

We can realise the Leavitt path algebras of finite graphs with no source in terms of corner skew Laurent polynomial rings. This way we can provide a short proof for
~\cite[Theorem~3.15]{haz}, which gives a criterion when a Leavitt path algebra associated to a finite graph is strongly graded.  

Let $E$ be a finite graph with no source and $E^0=\{v_1,\dots,v_n\}$ be the set of all vertices of $E$. For each $1\leq i \leq n$, we choose an edge $e_i$ such that $r(e_i)=v_i$ and consider $t_{+}=e_1+\dots+e_n \in \LL(E)_1$. Then $t_{-}= e^*_1+\dots+e^*_n$ is its right inverse.  
Thus by~\cite[Lemma~2.4]{arafrac}, $\LL(E)=\LL(E)_0[t_{+},t_{-},\phi]$, where $\phi:\LL(E)_0\rightarrow t_{+}t_{-}\LL(E)_0 t_{+}t_{-}$, $\phi(a)=t_{+}at_{-}$. 
Using this interpretation of Leavitt path algebras we are able to prove the following theorem.

\begin{theorem}\label{sthfin3}
Let $E$ be a finite graph.  Then $\LL(E)$ is strongly graded if and only if  $E$ does not have sinks.
\end{theorem}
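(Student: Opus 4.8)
My plan is to prove the two implications separately. The implication ``$\LL(E)$ strongly graded $\Rightarrow$ $E$ has no sinks'' I would prove by contraposition, and it needs none of the machinery above. Suppose $v$ is a sink of $E$. Then $v$ emits no edge, so $v\mu=0$ for every path $\mu$ of length $\ge 1$, and since $\LL(E)_1$ is spanned by monomials $\mu\nu^{*}$ with $|\mu|=|\nu|+1\ge 1$ we get $v\,\LL(E)_1=0$. If $\LL(E)$ were strongly graded we would have $\LL(E)_0=\LL(E)_1\LL(E)_{-1}$, so the nonzero idempotent $v\in\LL(E)_0$ could be written $v=\sum_k x_ky_k$ with $x_k\in\LL(E)_1$ and $y_k\in\LL(E)_{-1}$, giving $v=v^2=v\sum_k x_ky_k=\sum_k(vx_k)y_k=0$, a contradiction.

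For the converse I would assume $E$ has no sinks and induct on $\card(E^0)$. If $E$ has no sources, use the corner skew Laurent description preceding the statement: $\LL(E)=R[t_{+},t_{-},\phi]$ with $R=\LL(E)_0$, $t_{+}=e_1+\dots+e_n$, $t_{-}=e_1^{*}+\dots+e_n^{*}$, $\phi(a)=t_{+}at_{-}$, and hence $\phi(1)=t_{+}t_{-}=\sum_i e_ie_i^{*}$. By Proposition~\ref{lanhc8} it is enough to check that $\phi(1)$ is a full idempotent of $R$. Given a vertex $v$, pick (using that $v$ is not a sink) an edge $f$ with $s(f)=v$ and let $e_j$ be the chosen edge with $r(e_j)=r(f)$; using $e_j^{*}e_i=\delta_{ij}r(e_j)$ one computes $(fe_j^{*})\,\phi(1)\,(e_jf^{*})=ff^{*}$, with both outer factors in $R$, so $ff^{*}\in R\phi(1)R$. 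Summing over the finitely many edges out of $v$ gives $v=\sum_{s(f)=v}ff^{*}\in R\phi(1)R$, and summing over all vertices gives $1_R\in R\phi(1)R$; so $\phi(1)$ is full and $\LL(E)$ is strongly graded.

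If instead $E$ has a source $v$, I would reduce to a smaller graph by source elimination. Since $E$ has no sinks, $v$ emits an edge, and $r(f)\neq v$ for every $f$ with $s(f)=v$; deleting $v$ and the edges it emits yields a graph $E'$ with one fewer vertex that still has no sinks, since every surviving vertex keeps all of its outgoing edges. The idempotent $p=\sum_{w\neq v}w\in\LL(E)$ is homogeneous of degree $0$ and full — indeed $fp=f$ (as $r(f)\neq v$), so $v=\sum_{s(f)=v}ff^{*}=\sum_{s(f)=v}fpf^{*}\in\LL(E)p\LL(E)$ — and $p\LL(E)p\cong_{\gr}\LL(E')$. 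As being strongly graded is preserved under graded Morita equivalence, $\LL(E)$ is strongly graded iff $\LL(E')$ is, and the inductive hypothesis applies to $E'$. The delicate point of the whole argument is precisely this last reduction: one must verify that $p\LL(E)p$ is graded isomorphic to $\LL(E')$ and that strong gradedness transfers between a ring and the corner cut out by a full homogeneous idempotent of degree $0$. Both are standard facts of (graded) Morita theory, but I would state and cite them carefully; the remaining steps are direct verifications with the Cuntz--Krieger relations.
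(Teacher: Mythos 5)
Your proof is correct, and while the core of the ``if'' direction coincides with the paper's (realising $\LL(E)$ as the corner skew Laurent polynomial ring $\LL(E)_0[t_+,t_-,\phi]$ and invoking Proposition~\ref{lanhc8}), you diverge from the paper in two places. First, you make the ``only if'' direction explicit via the clean observation that a sink $v$ satisfies $v\LL(E)_1=0$, so $v=v^2\in v\,\LL(E)_1\LL(E)_{-1}=0$ if the algebra were strongly graded; the paper buries this inside the unproved ``easy observation'' that $t_+t_-$ is full iff $E$ has no sinks (and, for graphs with sources, does not address that direction at all), so your version is actually more complete here. You also supply the verification that $\phi(1)=t_+t_-=\sum_i e_ie_i^*$ is full (the cross terms $e_ie_j^*$ vanish since the chosen edges have distinct ranges, so your formula for $t_+t_-$ is correct), which the paper omits. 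Second, and more substantively, your source-elimination step is genuinely different: the paper identifies $\LL(E')$ with a non-unital graded subring of $\LL(E)$ and checks by a direct element computation that $1_{\LL(E)}=u+1_{\LL(E')}$ lies in $\LL(E)_n\LL(E)_{-n}$, using $u=\sum_{s(\alpha)=u}\alpha\alpha^*$ and $r(\alpha)\in\LL(E')_{n-1}\LL(E')_{-n+1}$; you instead pass to the corner $p\LL(E)p\cong_{\gr}\LL(E')$ by the full degree-zero idempotent $p=\sum_{w\neq v}w$ and transfer strong gradedness. The direction you actually need --- $p$ full, $p\LL(E)p$ strongly graded $\Rightarrow$ $\LL(E)$ strongly graded --- is true and admits a short direct proof (write $1=\sum_j c_jpd_j$ with $c_j,d_j$ homogeneous of opposite degrees and replace each middle $p$ by an element of $p A_{n-\deg c_j}pA_{-(n-\deg c_j)}p$), but I would caution against citing the blanket ``iff'' as an unexamined Morita fact: the statement that strong gradedness passes \emph{from} a ring \emph{to} a full corner is the less obvious direction, and you rightly flag this step as the one requiring careful justification. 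The paper's computation avoids graded Morita theory entirely and is thus more self-contained; your route is more conceptual and isolates the graph-theoretic content ($p$ is full because $v$ is a source but not a sink) from the ring theory.
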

\begin{proof}
We first prove the theorem for the case that $E$ is a finite graph with no sources. 
Write $\LL(E)=\LL(E)_0[t_{+},t_{-},\phi]$, where $\phi(1)=t_{+}t_{-}$. The theorem now follows from an easy to prove observation that $t_{+}t_{-}$ is a full idempotent if and only if $E$ does not have sinks along with Proposition~\ref{lanhc8}, that $\phi(1)$ is a full idempotent if and only if $\LL(E)_0[t_{+},t_{-},\phi]$ is strongly graded.

Now suppose $E$ is a finite graph containing source vertices. Let $u$ be a source in $E$.  Remove the vertex $u$ and all the edges emitting from it
from the graph $E$ and call this new graph $E'$. We prove that if $\LL(E')$ is strongly graded, then $\LL(E)$ is strongly graded. Note that there is natural graph morphism from $E'$ to $E$ which induces a graded monomorphism $\LL(E')\rightarrow \LL(E)$. So we identify $\LL(E')$ as a (non-unital) subring of $\LL(E)$. We need to show that for any $n\in \mathbb Z$, $1_{\LL(E)}\in \LL(E)_n\LL(E)_{-n}$. 
Write  \[1_{\LL(E)}=\sum_{v\in E^{0}} v=u+\sum_{v\in E'^{0}} v=u+1_{\LL(E')}.\] Since by assumption $\LL(E')$ is strongly graded, for any $n \in \mathbb Z$,  $1_{\LL(E')}\in \LL(E')_n\LL(E')_{-n} \subseteq \LL(E)_n\LL(E)_{-n}$. Thus we only need to show $u\in \LL(E)_n\LL(E)_{-n}$. 
Write $u=\sum_{\{\alpha \in E^1\mid s(\alpha)=u \}} \alpha \alpha^*$. Since for any such $\alpha$, $r(\alpha)\in E'^0$, and $\LL(E')$ is strongly graded, we have $r(\alpha)\in \LL(E')_0=\LL(E')_{n-1}\LL(E')_{-n+1}$. So 
\[\alpha\alpha^*=\alpha r(\alpha)\alpha^* \in \alpha \LL(E')_{n-1}\LL(E')_{-n+1} \alpha^* \subseteq   \alpha \LL(E)_{n-1}\LL(E)_{-n+1} \alpha^* \subseteq 
\LL(E)_n\LL(E)_{-n}. \] This shows that $u \in \LL(E)_n\LL(E)_{-n}$. 

Finally starting from the graph $E$, repeatedly removing the sources, since the graph is finite, in a finite number of steps we get a graph with no sources and no sinks. By the first part of the proof, the Leavitt path algebra of this graph is strongly graded. An easy induction now shows $E$ is strongly graded as well. 
\end{proof}

Let $A$ be a strongly $\mathbb Z$-graded ring (the following is in fact valid for any graded group $\Gamma$). By Dade's Theorem~\cite[Thm.~3.1.1]{grrings}, the functor $(-)_0:\mbox{gr-}A\rightarrow \mbox{mod-}A_0$, $M\mapsto M_0$, is an additive functor with an  inverse $-\otimes_{A_0} A: \mbox{mod-}A_0 \rightarrow \mbox{gr-}A$ so that it induces an equivalence of categories.  
 This implies
that $K_i^{\gr}(A)  \cong K_i(A_0)$, for $i\geq 0$.  Furthermore, $A_n \otimes_{A_0}A_m \cong A_{n+m}$ as $A_0$-bimodules  and $A_n\otimes_{A_0}A\cong A(n)$ as graded $A$-modules, where $n, m\in \mathbb Z$.

Let $E$ be a finite graph with no sinks. Set $A=\LL_K(E)$ which is a strongly $\mathbb Z$-graded ring by Theorem~\ref{sthfin3}.  For any $u \in E^0$ and $i \in \mathbb Z$, $uA(i)$ is a right graded finitely generated projective $A$-module and any graded finitely generated projective $A$-module is generated by these modules up to isomorphism, i.e., 
$$\mathcal V^{\gr}(A)=\Big \langle \big [uA(i)\big ]  \mid u \in E^0, i \in \mathbb Z \Big \rangle, 
$$ and $K_0^{\gr}(A)$ is the group completion of $\mathcal V^{\gr}(A)$. The action of $\mathbb N[x,x^{-1}]$ on $\mathcal V^{\gr}(A)$ and thus the action of $\mathbb Z[x,x^{-1}]$ on $K_0^{\gr}(A)$ is defined on generators by $x^j [uA(i)]=[uA(i+j)]$, where $i,j \in \mathbb Z$. We first observe that for $i\geq 0$, 

\begin{equation}\label{hterw}
x[uA(i)]=[uA(i+1)]=\sum_{\{\alpha \in E^1 \mid s(\alpha)=u\}}[r(\alpha)A(i)].
\end{equation}

First notice that for $i\geq 0$, $A_{i+1}=\sum_{\alpha \in E^1} \alpha A_i$. It follows \[uA_{i+1}=\bigoplus_{\{\alpha \in E^1 \mid s(\alpha)=u\}} \alpha A_i\] as $A_0$-modules. Using the fact that $A_n\otimes_{A_0}A\cong A(n)$, $n \in \mathbb Z$, and the fact that $\alpha A_i \cong r(\alpha) A_i$ as $A_0$-module,
we get \[uA(i+1) \cong \bigoplus_{\{\alpha \in E^1 \mid s(\alpha)=u\}} r(\alpha) A(i)\] as graded $A$-modules. This gives~(\ref{hterw}).

\begin{theorem}\label{mainii}
Let $E$ and $F$ be  finite graphs with no sinks such that 
\[\big (K_0^{\gr}(\LL(E)),[\LL(E)]\big ) \cong \big (K_0^{\gr}(\LL(F)),[\LL(F)]\big ).\] Then there is an isomorphism  
 $K_0(\LL(E)) \cong K_0(\LL(F)),$ where $[\LL(E)]$ is sent to $[\LL(F)].$ 
\end{theorem}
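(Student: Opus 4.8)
The plan is to reduce the theorem to the single statement that ordinary $K_0$ is the shift-coinvariants of graded $K_0$: for a finite graph $E$ with no sinks there is an isomorphism
$$K_0\big(\LL(E)\big)\;\cong\;K_0^{\gr}\big(\LL(E)\big)\big/(1-x)K_0^{\gr}\big(\LL(E)\big)$$
sending $[\LL(E)]$ to the class of $[\LL(E)]$, and likewise for $F$. Granting this, the theorem is immediate: the given $\mathbb Z[x,x^{-1}]$-module isomorphism $K_0^{\gr}(\LL(E))\to K_0^{\gr}(\LL(F))$ commutes with multiplication by $1-x$, hence descends to an isomorphism of the two quotients, and it carries the class of $[\LL(E)]$ to the class of $[\LL(F)]$ since it carries $[\LL(E)]$ to $[\LL(F)]$. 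Composing with the two displayed identifications yields $K_0(\LL(E))\cong K_0(\LL(F))$ with $[\LL(E)]\mapsto[\LL(F)]$.

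To prove the displayed isomorphism I would argue as follows. Since $E$ has no sinks, $\LL(E)$ is strongly $\mathbb Z$-graded by Theorem~\ref{sthfin3}, so the forgetful functor $\mbox{gr-}\LL(E)\to\mbox{mod-}\LL(E)$ induces a homomorphism $\theta_E\colon K_0^{\gr}(\LL(E))\to K_0(\LL(E))$. Forgetting the grading identifies a graded projective module with each of its shifts, so $\theta_E$ annihilates $[\LL(E)(1)]-[\LL(E)]$ and hence all of $(1-x)K_0^{\gr}(\LL(E))$; and it is surjective because $K_0(\LL(E))$ is generated by the vertex classes $[v\LL(E)]=\theta_E\big([v\LL(E)(0)]\big)$. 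So the remaining issue is injectivity of the induced map on the quotient, and for this I would pass to explicit presentations. Formula~(\ref{hterw}) at $i=0$ reads $x[v\LL(E)(0)]=\sum_{w}N_E(v,w)[w\LL(E)(0)]$, where $N_E$ is the adjacency matrix of $E$; together with the graded form of the Ara--Moreno--Pardo computation (see~\cite{hazann}) this presents $K_0^{\gr}(\LL(E))$ as the $\mathbb Z[x,x^{-1}]$-module $\coker\big(xI-N_E^{t}\colon\mathbb Z^{E^0}[x,x^{-1}]\to\mathbb Z^{E^0}[x,x^{-1}]\big)$, with $[v\LL(E)(0)]$ the $v$-th basis vector and $[\LL(E)]=(1,\dots,1)$. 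Setting $x=1$ turns this into $\coker\big(I-N_E^{t}\colon\mathbb Z^{E^0}\to\mathbb Z^{E^0}\big)$, which --- because $E$ has no sinks, so the relevant matrix is the full square matrix indexed by $E^0$ --- is exactly $K_0(\LL(E))$ by the standard computation of $K_0$ of Leavitt path algebras (see~\cite{amp}), again with $[v\LL(E)]\leftrightarrow e_v$ and $[\LL(E)]\leftrightarrow(1,\dots,1)$. Under these identifications $\theta_E$ becomes the identity of $\coker(I-N_E^{t})$, hence descends to an isomorphism, proving the claim. (For graphs that also have no sources one could instead feed the realization $\LL(E)=\LL(E)_0[t_{+},t_{-},\phi]$ into the $K$-theory exact sequence of~\cite{arafrac}, but the presentation argument has the merit of allowing sources.)

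The one genuinely technical input is the middle step: that~(\ref{hterw}) accounts for \emph{all} relations among the classes $[uA(i)]$, i.e.\ that $K_0^{\gr}(\LL(E))$ is presented by $xI-N_E^{t}$ (equivalently $K_0^{\gr}(\LL(E))\cong\varinjlim(\mathbb Z^{E^0}\xrightarrow{N_E^{t}}\mathbb Z^{E^0}\xrightarrow{N_E^{t}}\cdots)$ with $x$ acting as $N_E^{t}$), together with the bookkeeping needed to track the distinguished class $[\LL(E)]$ through each identification. This is the graded refinement of the classical $K_0$-computation, and it becomes available precisely once $\LL(E)$ is known to be strongly graded, which is why Theorem~\ref{sthfin3} is used in an essential way; everything else --- that $\theta_E$ kills $1-x$, is surjective, and becomes the identity under the presentations, and the final descent of the given isomorphism --- is routine.
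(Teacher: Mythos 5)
Your proposal is correct, and it establishes the same key fact as the paper --- the exactness of $K_0^{\gr}(A)\xrightarrow{\;x-1\;}K_0^{\gr}(A)\xrightarrow{\;U\;}K_0(A)\to 0$ (the paper's sequence~(\ref{kaler}); your claim $K_0\cong K_0^{\gr}/(1-x)K_0^{\gr}$ is a reformulation of it) --- but by a genuinely different route for the one nontrivial containment $\ker U\subseteq (1-x)K_0^{\gr}$. The paper stays inside the category of graded modules: it first shows $[uA(i)]-[uA(j)]\in\Im(x-1)$ by a telescoping argument, reduces an arbitrary element of $\ker U$ to a difference $[P']-[Q']$ of unshifted vertex modules, and then invokes the confluence lemma for the graph monoid $M_E$ (\cite[Lemma~4.3]{amp}) together with formula~(\ref{hterw}) to connect $[P']$ and $[Q']$ through a common refinement lying in $\Im(x-1)$. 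You instead import the explicit presentation $K_0^{\gr}(\LL(E))\cong\coker\bigl(xI-N_E^t\bigr)$ over $\mathbb Z[x,x^{-1}]$ (equivalently $\varinjlim(\mathbb Z^{E^0},N_E^t)$ with $x$ acting as $N_E^t$, which comes from Dade's theorem and the ultramatricial structure of $\LL(E)_0$ as in~\cite{hazann}) together with the classical $K_0(\LL(E))\cong\coker(I-N_E^t)$ from~\cite{amp}, and observe that the latter is the $x=1$ specialization of the former; your computation of the quotient by $(1-x)$ is correct, and you rightly identify the presentation of $K_0^{\gr}$ as the technical input and correctly note that it becomes available exactly because Theorem~\ref{sthfin3} makes $\LL(E)$ strongly graded when there are no sinks. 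The trade-off: the paper's argument is self-contained at the level of graded projective modules, needing only the generating set $\{[uA(i)]\}$ and relation~(\ref{hterw}) rather than the full computation of $K_0^{\gr}$; yours is shorter and structurally more transparent once the matrix presentations (including the bookkeeping $[v\LL(E)(0)]\mapsto e_v$ and $[\LL(E)]\mapsto(1,\dots,1)$) are granted from the literature. The final descent step --- that the given $\mathbb Z[x,x^{-1}]$-module isomorphism commutes with $1-x$ and hence passes to the quotients carrying $[\LL(E)]$ to $[\LL(F)]$ --- is identical in both arguments.
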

\begin{proof}
Set $A=\LL(E)$. Recall that the shift functor $T_1:\gr-A\rightarrow \gr-A$, $P\mapsto P(1)$ induces an isomorphism $K_0(T_1):K^{\gr}_0(A)\rightarrow K^{\gr}_0(A)$. Consider the map $\phi:=K_0(T_1)-\id$. We first show that the following sequence is exact. 
\begin{equation}\label{kaler}
K_0^{\gr}(A) \stackrel{\phi}{\longrightarrow}
K_0^{\gr}(A) \stackrel{U}\longrightarrow K_0(A) \longrightarrow 0.
\end{equation}
Here $U$ is the homomorphism induced by the forgetful functor.

Since the graded finitely generated projective modules of $A=\LL(E)$ are generated by the set $\{uA(i) \mid u\in E^0, i\in \mathbb Z\}$ and the finitely generated projective modules of $\LL(A)$ are generated by $\{uA \mid u\in E^0\}$, it follows that $U$ is an epimorphism.

Since each element of $K^{\gr}_0(A)$ is of the form $[P]-[Q]$, where $P$ and $Q$ are graded finitely generated $A$-module, we have
\[U\phi\big([P]-[Q]\big)=U\big([P(1)]-[Q(1)]-[P]+[Q]\big)=[P]-[Q]-[P]+[Q]=0.\] Thus $\Im(\phi) \subseteq \ker(U)$.

We will show that $\ker(U)\subseteq \Im(\phi)$.  We first show that for $u\in E^0$ and $i,j \in \mathbb Z$, we have 
\begin{equation}\label{inyouroom}
[uA(i)]-[uA(j)]\in \Im(\phi).
\end{equation}
Suppose $i> j$. (The case $j<i$ is similar.) Set $P=uA(j)\oplus uA(j+1)\oplus\dots\oplus uA(i-1)$. It is easy to see that $\phi(p)=[uA(i)]-[uA(j)]$. 

 Suppose $a=[P]-[Q] \in K^{\gr}_0(A)$ such that $U(a)=0$. Thus $[P]=[Q]$ in $K_0(A)$. So $P\oplus A^k\cong Q\oplus A^k$, for some $k\in \mathbb N$. Without changing $a$ we replace $P$ by $P\oplus A^k$ and $Q$ by $Q\oplus A^k$. (Note that here $A^k$ is considered as a graded $A$-module.) Thus $[P]=[Q]$ in  $\mathcal V(R)$. 

As a graded $A$-module 
\[P\cong u_1A(k_1)\oplus\dots\oplus u_nA(k_n),\] where $u_i\in E^0$ and $k_i\in \mathbb Z$. Similarly, 
$Q\cong v_1A(l_1)\oplus\dots\oplus v_mA(l_m)$, where $v_i\in E^0$ and $l_i\in \mathbb Z$.  

Consider the graded projective module $P'=u_1A\oplus\dots\oplus u_nA$ (which is obtained from $P$ by removing all the shifting). Then from~(\ref{inyouroom}) it follows that $[P]-[P'] \in \Im(\phi)$. Similarly if $Q'=v_1A\oplus\dots\oplus v_mA$ then $[Q]-[Q']\in \Im(\phi)$. We will show that $[P']-[Q']\in \Im(\phi)$, which then implies $a=[P]-[Q]\in \Im(\phi)$.  Note that since $U([P]-[Q])=0$, it follows $U([P']-[Q'])=0$. 

Recall that there is an isomorphism $\mathcal V(A) \cong M_E$, $[uA]\mapsto [u]$. Combining this with the epimorphism $U:\mathcal V^{\gr}(A) \rightarrow \mathcal V(A)$ and passing $[P']$ and $[Q']$ to $M_E$, since $[P']=[Q']$ in $\mathcal V(A)$, we get
\[[u_1]+\dots+ [u_n] = [v_1]+\dots+[v_m].\]
Set $p'=[u_1]+\dots+ [u_n]$ and $q'=[v_1]+\dots+[v_m]$. By~\cite[Lemma~4.3]{amp}, $p'=q'$ in $M_E$ if and only if $p'\rightarrow s$ and $q'\rightarrow s$, where $s=[w_1]+\dots+[w_t]  \in F$. 
Choose  the graded finitely generated projective $A$-module $S=w_1A\oplus \dots \oplus w_tA$ (i.e., with no shifting) as a preimage for $s$. Then $[S]\in \mathcal V^{\gr}(A)$ maps to $s\in M_E$. We will show that $[P']-[S]\in \Im(\phi)$ and  $[Q']-[S]\in \Im(\phi)$. It then follows $[P']-[Q']\in \Im(\phi)$. We proceed by induction. Suppose $p'\rightarrow_1 s$. Then by the definition, for a vertex, say $u_1$ in $p'$, \[s=\sum_{\{\alpha\in E^1\mid s(\alpha)=u_1\}}[r(\alpha)]+[u_2]+\dots+ [u_n].\] Then 
\begin{equation*}
[S]=\sum_{\{\alpha\in E^1\mid s(\alpha)=u_1\}}[r(\alpha)A]\oplus [u_2A\oplus\dots\oplus u_nA]
= [u_1A(1)\oplus u_2A\oplus\dots\oplus u_nA],
\end{equation*}
as $[u_1A(1)]=\sum_{\{\alpha\in E^1\mid s(\alpha)=u_1\}}[r(\alpha)A]$ (see~(\ref{hterw})). Since $[P']= [u_1A\oplus u_2A\oplus\dots\oplus u_nA]$, from~(\ref{inyouroom}) it follows that 
$[P']-[S]\in \Im(\phi)$. Now an easy induction shows if $p'\rightarrow s$ then $[P']-[S]\in \Im(\phi)$. Similarly $[Q']-[S]\in \Im(\phi)$. Thus we proved that the sequence~(\ref{kaler}) is exact. 

Suppose $\psi: K^{\gr}_0(\LL(E))\rightarrow K^{\gr}_0(\LL(F))$ is an ordered preserving $\mathbb Z[x,x^{-1}]$-isomorphism such that $\psi([\LL(E)])=[\LL(F)]$. 
For any generator $[P]$ of $K^{\gr}_0(\LL(E))$ we have 
\[\psi\phi([P])=\psi([P(1)]-[P])=\psi(x[P])-\psi([P])=x\psi([P])-\psi([P])=\psi([P])(1)-\psi([P])=\phi\psi([P]).\]
This shows that the left hand side of the following diagram is commutative 
\begin{equation}\label{lol153332}
\xymatrix@=15pt{
K^{\gr}_0(\LL(E)) \ar[d]^{\psi} \ar[r]^{\phi} &
K^{\gr}_0(\LL(E)) \ar[d]^{\psi} \ar[r]& K_0(\LL(E))\ar@{.>}[d] \ar@{.>}[d]\ar[r] &0\\
K^{\gr}_0(\LL(F)) \ar[r]^{\phi}  &
K^{\gr}_0(\LL(F)) \ar[r]& K_0(\LL(F)) \ar[r] \ar[r] &0,}
\end{equation}
and so induces a natural order preserving isomorphism, call it $\psi$ again, $\psi:K_0(\LL(E)\rightarrow K_0(\LL(F))$ such that $\psi([\LL(E)])=[\LL(F)].$ This completes the proof. 
\end{proof}

\begin{remark}
\begin{enumerate}

\item The converse of the Theorem~\ref{mainii} is not valid. Namely if for two finite graphs $E$ and $F$, 
$\big(K_0(\LL(E)),[\LL(E)]\big) \cong \big(K_0(\LL(F)),[\LL(F]\big)$, this does not imply that $K^{\gr}_0(\LL(E))\cong K^{\gr}_0(\LL(E))$. For example for graphs 
\medskip
\begin{equation*}
{\def\labelstyle{\displaystyle}
E : \quad \,\, \xymatrix{
 \bullet  \ar@(lu,ld)\ar@/^0.9pc/[r] & \bullet \ar@/^0.9pc/[l] 
}} \qquad  \quad
{\def\labelstyle{\displaystyle}
F: \quad \,\, \xymatrix{
 \bullet  \ar@(lu,ld)\ar@/^0.9pc/[r] & \bullet \ar@(ru,rd)\ar@/^0.9pc/[l] 
}} \qquad  \quad 
\end{equation*}

\medskip
one can calculate that $K_0(\LL(E))=K_0(\LL(F))=0$ but $K_0^{\gr}(\LL(E)) \cong \mathbb Z\oplus \mathbb Z$, and $K_0^{\gr}(\LL(F))=\mathbb Z[1/2]$ 
(see~\cite[Example~11]{hazann}). 

\item The isomorphism of formula~(\ref{ookjh2})
\begin{equation*}
\big (K_0^{\gr}(\LL(E)),[\LL(E)]\big ) \cong \big (K_0^{\gr}(\LL(F)),[\LL(F)]\big )
\end{equation*}
implies that 
\begin{equation}\label{kgrd83}
K_i^{\gr}(\LL(E)) \cong  K_i^{\gr}(\LL(F)), i\geq 0.
\end{equation}
Indeed, since $\LL(E)$ and $\LL(F)$ are strongly graded, applying the Dade's Theorem~\cite[Thm.~3.1.1]{grrings}, we get 
\begin{equation*}
\big (K_0(\LL(E)_0),[\LL(E)_0]\big ) \cong \big (K_0(\LL(F)_0),[\LL(F)_0]\big ).
\end{equation*} 
Since $\LL(E)_0$ and $\LL(F)_0$ are ultramatricial algebras,  by \cite[Theorem~15.26]{goodearlbook} $\LL(E)_0\cong \LL(F)_0$. Again using Dade's Theorem we get 
\[ K^{\gr}_i(\LL(E))\cong K_i(\LL(E)_0)\cong K_i(\LL(F)_0)\cong  K^{\gr}_i(\LL(F)).\]

\end{enumerate}
\end{remark}

\begin{remark}

We relate the exact sequence~\ref{lol153332} with the Abrams-Tomforde conjecture. 

\begin{conjecture}{\sc (The Abrams-Tomforde Conjecture)} If $E$ and $F$ are graphs, then $\LL_{\mathbb C}(E) \cong \LL_{\mathbb C}(F)$ (as rings) implies that 
$C^*(E) \cong C^*(F)$ (as $*$-algebras).
\end{conjecture}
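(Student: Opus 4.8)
The plan is to pass a ring isomorphism through algebraic $K$-theory and then transport the resulting invariants to the topological $K$-theory of the graph $C^*$-algebras, where classification theorems are available. So suppose $\theta\colon\LL_{\mathbb C}(E)\xrightarrow{\sim}\LL_{\mathbb C}(F)$ is a ring isomorphism. Since $\theta$ carries idempotents to idempotents and finitely generated projectives to finitely generated projectives, it preserves the monoid $\V$ of isomorphism classes of finitely generated projective modules, and hence induces an order- and unit-preserving isomorphism $\big(K_0(\LL_{\mathbb C}(E)),[\LL_{\mathbb C}(E)]\big)\cong\big(K_0(\LL_{\mathbb C}(F)),[\LL_{\mathbb C}(F)]\big)$, together with an isomorphism $K_1(\LL_{\mathbb C}(E))\cong K_1(\LL_{\mathbb C}(F))$ by functoriality. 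The subtlety is that $\theta$ need not respect the canonical $\mathbb Z$-grading, so it does not a priori give a $\mathbb Z[x,x^{-1}]$-module isomorphism of the graded groups $K_0^{\gr}$, and thus the graded Conjecture~(\ref{s22}) cannot be invoked directly.

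Next I would identify these algebraic invariants with the $K$-theory of $C^*(E)$ and $C^*(F)$. For a finite graph with no sinks, Theorem~\ref{sthfin3} makes $\LL_{\mathbb C}(E)$ strongly graded, and extending the exact sequence~(\ref{kaler}) of Theorem~\ref{mainii} on the left by $\ker\phi$ gives the exact sequence
\[0\longrightarrow\ker\phi\longrightarrow K_0^{\gr}(\LL(E))\stackrel{\phi}{\longrightarrow}K_0^{\gr}(\LL(E))\stackrel{U}{\longrightarrow}K_0(\LL(E))\longrightarrow 0,\]
which is the algebraic mirror of the gauge-action (Pimsner--Voiculescu) sequence for $C^*(E)$: under $K_0^{\gr}(\LL(E))\cong K_0(\LL(E)_0)$ and the agreement of the ultramatricial algebra $\LL(E)_0$ with the AF core of $C^*(E)$ on $K_0$, the map $\phi=K_0(T_1)-\id$ becomes $1-\beta_{\ast}$, where $\beta$ is the shift. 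Hence $\coker\phi\cong K_0(\LL(E))\cong K_0(C^*(E))$ with matching order unit, while $\ker\phi\cong\ker(I-A_E^t)\cong K_1(C^*(E))$. This is exactly the sense in which~(\ref{kaler}) ``sees'' the $C^*$-algebra: the graded $K_0$ together with the shift encodes the pair $\big(K_0(C^*(E)),K_1(C^*(E))\big)$.

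To recover $K_1(C^*(E))$ from the \emph{ungraded} ring data supplied by $\theta$, I would use the long exact sequence of Ara--Brustenga--Corti\~nas, which presents $K_1(\LL_{\mathbb C}(E))$ as an extension
\[0\longrightarrow\coker\!\big((\mathbb C^\times)^{E^0}\xrightarrow{\,I-A_E^t\,}(\mathbb C^\times)^{E^0}\big)\longrightarrow K_1(\LL_{\mathbb C}(E))\longrightarrow\ker(I-A_E^t)\longrightarrow 0.\]
Because $\mathbb C$ is algebraically closed, $\mathbb C^\times$ is a divisible group, so the left-hand term is divisible, whereas $\ker(I-A_E^t)$ is free; thus the maximal divisible subgroup of $K_1(\LL_{\mathbb C}(E))$ is precisely the left-hand term, and the reduced quotient is canonically $\ker(I-A_E^t)\cong K_1(C^*(E))$. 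Applying $\theta$ to this functorial construction yields $K_1(C^*(E))\cong K_1(C^*(F))$. Since graph $C^*$-algebras are nuclear and satisfy the UCT, in the purely infinite simple unital case the Kirchberg--Phillips theorem identifies $\big(K_0,[1],K_1\big)$ as a complete $\ast$-isomorphism invariant, and matching this triple gives $C^*(E)\cong C^*(F)$.

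I expect the main obstacle to be the passage from this purely infinite simple case to \emph{arbitrary} graphs. When $\LL_{\mathbb C}(E)$ is not simple, the bare groups $\big(K_0,[1],K_1\big)$ no longer classify $C^*(E)$: one must match the full filtered, ideal-sensitive $K$-theory, and it is far from clear that a ring isomorphism—which can scramble the gauge action, and hence the grading underlying $\phi$—transports this finer data. Equivalently, the divisibility argument recovers the free group $\ker(I-A_E^t)$ but discards the shift-equivariance that the graded Conjecture~(\ref{s22}) would otherwise provide, and it is exactly this lost equivariance, together with the ideal lattice (and the step to infinite graphs), that a proof of the general conjecture must control.
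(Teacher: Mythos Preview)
The statement you are attempting is labelled a \emph{Conjecture} in the paper, not a theorem, and the paper does not prove it. What the paper does, in the surrounding Remark, is something strictly weaker: it assumes the \emph{graded} hypothesis of formula~(\ref{ookjh2}) (equivalently $\LL_{\mathbb C}(E)\cong_{\gr}\LL_{\mathbb C}(F)$), and then reads off both $K_0(C^*(E))\cong K_0(C^*(F))$ and $K_1(C^*(E))\cong K_1(C^*(F))$ as the cokernel and kernel of $1-N^t$ via the diagram splicing~(\ref{lol153332}) with the Raeburn--Szyma\'nski sequence, after which Kirchberg--Phillips applies in the purely infinite simple case. In other words, the paper's discussion needs the shift-equivariance you explicitly say you do not have.

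Your proposal is therefore not a variant of the paper's argument but an attempt at something the paper does not do: start from a bare ring isomorphism. Your mechanism for recovering $K_1(C^*(E))$ is genuinely different from the paper's---instead of using the graded $K_0$ and the map $\phi$, you pull $\ker(I-A_E^t)$ out of the algebraic $K_1(\LL_{\mathbb C}(E))$ by quotienting out the maximal divisible subgroup. That idea is sound for finite graphs with no sinks and, together with $\V(\LL_{\mathbb C}(E))\cong\V(C^*(E))$ for $K_0$, it does reproduce the known purely infinite simple case of Abrams--Tomforde. But it is not a proof of the conjecture as stated: you yourself flag that beyond the purely infinite simple range the triple $(K_0,[1],K_1)$ is not a complete invariant, and nothing in your outline controls the filtered/ideal-related $K$-theory or the passage to infinite graphs. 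So the honest summary is that the paper offers no proof to compare against, your argument gives a correct partial result by a route different from the paper's graded-diagram discussion, and the general statement remains open.
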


The conjecture was raised in~\cite{genetomforde}, where an affirmative answer was given for the case that the graphs are row-finite, cofinal, satisfy Condition (L), and contain at least one cycle. These graphs have no sinks and the graph $C^*$-algebra of such graphs are purely infinite and simple. Thus the Kirchberg-Phillips classification theorem can be applied~\cite{phillips}. Starting from $\LL_{\mathbb C}(E) \cong_{\gr} \LL_{\mathbb C}(F)$, (or just the formula~(\ref{ookjh2})), the following diagram shows how we can systematically obtain that $K_0(C^*(E))\cong K_0(C^*(F))$ and $K_1(C^*(E))\cong K_1(C^*(F))$, and so by Kirchberg-Phillips Theorem, $C^*(E) \cong C^*(F)$, when $E$ and $F$ are finite. The diagram is the combination of~(\ref{lol153332}) and observing that the map $\phi$ coincides with $1-N^t$,
 along with the exact sequence obtained by Raeburn and Szyma\'nski~\cite[Theorem~3.2]{Raeburn113}  (see~\cite[Remark~5]{hazann} for details). 

\begin{equation*}
\xymatrix@=15pt{0 \ar[r] & K_1(C^*(E)) \ar[r] \ar@{.>}[ddd] &
K_0\big(C^*(E\times_1 \mathbb Z)\big) \ar[d]^{\cong}
\ar[r]^{1-\beta^{-1}_*} & K_0\big(C^*(E\times_1 \mathbb Z)\big) \ar[d]^{\cong} \ar[r]& K_0(C^*(E))\ar@{.>}[d]\ar[r]& 0\\
& & K^{\gr}_0(\LL_{\mathbb C}(E)) \ar[d]^{\cong} \ar[r]^{1-N^{t}} &
K^{\gr}_0(\LL_{\mathbb C}(E)) \ar[d]^{\cong} \ar[r]& K_0(\LL_{\mathbb C}(E))\ar@{.>}[d] \ar@{.>}[d]\ar[r] &0\\
& & K^{\gr}_0(\LL_{\mathbb C}(F)) \ar[r]^{1-N^{t}} \ar[d]^{\cong} &
K^{\gr}_0(\LL_{\mathbb C}(F)) \ar[r]\ar[d]^{\cong}& K_0(\LL_{\mathbb C}(F)) \ar[r] \ar@{.>}[d]\ar[r] &0\\
0 \ar[r] & K_1(C^*(F)) \ar[r]  &
K_0\big(C^*(F\times_1 \mathbb Z)\big) 
\ar[r]^{1-\beta^{-1}_*} & K_0\big(C^*(F\times_1 \mathbb Z)\big)  \ar[r]& K_0(C^*(F)) \ar[r]& 0.}
\end{equation*}
\end{remark}

\end{document}